\def\T{\textcolor{black}{\textbf{\checkmark}}}
\def\Tt{\textcolor{blue}{\textbf{\checkmark$_2$}}}
\def\Tv{\textcolor{Plum}{\textbf{\checkmark$_7$}}}
\def\Ts{\textcolor{ForestGreen}{\textbf{\checkmark$_8$}}}
\def\F{\textcolor{black}{\textbf{\text{\ding{55}}}}}
\def\Ff{\textcolor{red}{\textbf{\text{\ding{55}$_8$}}}}
\def\Fv{\textcolor{red}{\textbf{\text{\ding{55}$_5$}}}}
\def\Q{\textbf{?}}
\newtheorem{theorem}{Theorem}
\newtheorem{corollary}{Corollary}[theorem]
\newtheorem{question}{Question}
\title{Uniquely $K^{(k)}_r$-saturated Hypergraphs}
\author{
  Andr\'{a}s Gy\'{a}rf\'{a}s\thanks{Research was supported in part by grant (no.\ K116769) from the National Research, Development and Innovation Office---NKFIH.}\\
  \small Alfr\'ed R\'enyi Institute of Mathematics\\[-0.8ex]
  \small Hungarian Academy of Sciences\\[-0.8ex]
  \small Budapest, P.O. Box 127\\[-0.8ex]
  \small Budapest, Hungary, H-1364\\
  \small\tt gyarfas.andras@renyi.mta.hu\\
  \and
  Stephen G. Hartke\thanks{Partly supported by a U.S. Fulbright Scholar Fellowship and by a grant from the Simons Foundation (\#316262 to Stephen Hartke).} \quad Charles Viss\\
  \small Dept.\ of Mathematical and Statistical Sciences\\[-0.8ex]
  \small University of Colorado Denver\\[-0.8ex]
  \small Denver, CO 80217 USA\\
  \small\tt stephen.hartke@ucdenver.edu,\\[-0.8ex]
  \small\tt charles.viss@ucdenver.edu
}
\date{\small Submitted: December 7, 2017 \\ 
\small 2010 Mathematics Subject Classifications: 
05C65, 
05D15, 
05D05, 
05B05. 
}
\begin{document}

\maketitle 

\begin{abstract}
In this paper we generalize the concept of uniquely $K_r$-saturated graphs to hypergraphs. Let $K_r^{(k)}$ denote the complete $k$-uniform hypergraph on $r$ vertices. For integers $k,r,n$ such that $2\le k <r<n$, a $k$-uniform hypergraph $H$ with $n$ vertices is  {\em uniquely $K_r^{(k)}$-saturated} if $H$ does not contain $K_r^{(k)}$ but adding to $H$ any $k$-set that is not a hyperedge of $H$ results in {\em exactly one} copy of  $K_r^{(k)}$. Among uniquely $K_r^{(k)}$-saturated hypergraphs, the interesting ones are the {\em primitive} ones that do not have a dominating vertex---a vertex belonging to all possible ${n-1\choose k-1}$ edges. Translating the concept to the complements of these hypergraphs, we obtain a natural restriction of $\tau$-critical hypergraphs: a hypergraph $H$ is {\em uniquely $\tau$-critical} if for every edge $e$, $\tau(H-e)=\tau(H)-1$ and $H-e$ has a unique transversal of size $\tau(H)-1$.

We have two constructions for primitive uniquely $K_r^{(k)}$-saturated hypergraphs. One shows that for $k$ and $r$ where $4\le k<r\le 2k-3$, there exists such a hypergraph for every $n>r$. This is in contrast to the case $k=2$ and $r=3$ where only the Moore graphs of diameter two have this property. Our other construction keeps $n-r$ fixed; in this case we show that for any fixed $k\ge 2$ there can only be finitely many examples.  We give a range for $n$ where these hypergraphs exist. For $n-r=1$ the range is completely determined: $k+1\le n \le {(k+2)^2\over 4}$. For larger values of $n-r$ the upper end of our range reaches approximately half of its upper bound. The lower end depends on the chromatic number of certain Johnson graphs.
\end{abstract}

\section{Introduction}
A graph $G$ is said to be \textit{$H$-saturated} if $G$ contains no subgraph isomorphic to $H$ and if for every edge $e$ in the complement of $G$, the graph $G + e$ does contain a subgraph isomorphic to $H$. A classic result regarding such graphs is provided by Tur\'{a}n~\cite{Turan}: the maximum number of edges in a $K_r$-saturated graph on $n$ vertices is achieved by the nearly-balanced complete $(r-1)$-partite graph on $n$ vertices. Note that in this graph, although $K_r$ does not appear as a subgraph, adding any missing edge results in many copies of $K_r$. 

On the other hand, Erd\H{o}s, Hajnal, and Moon~\cite{Moon} showed that the minimum number edges in a $K_r$-saturated graph on $n$ vertices is achieved by an $(r-2)$-clique joined with an independent set of size $n-r+2$. In this graph, adding any missing edge results in exactly one copy of $K_r$. Motivated by this phenomenon, a graph $G$ is said to be \textit{uniquely $H$-saturated} if $G$ contains no copy of $H$ and if for every edge $e$ in the complement of $G$, the graph $G + e$ contains exactly one copy of $H$. 

Cooper, Lenz, LeSaulnier, Wenger, and West~\cite{Cooper} initiated the study of uniquely saturated graphs by classifying all uniquely $C_k$-saturated graphs for $k \in \{3,4\}$, proving that there are only finitely many such graphs in each case. Later, Wenger and West \cite{Wenger} proved that the uniquely $C_5$-saturated graphs are precisely the friendship graphs and that there are no uniquely $C_k$-saturated graphs for $k \in \{6,7\}$. Furthermore, they prove that there are only finitely many $C_k$-saturated graphs for all $k \geq 6$ and conjecture that no such graphs exist. Other studies of uniquely $H$-saturated graphs include that of Berman, Chappell, Faudree, Gimbel, and Hartman~\cite{Berman}, in which the authors characterize uniquely $T$-saturated graphs for certain trees $T$. 

The study of $H$-saturated graphs most relevant to this paper was undertaken by Hartke and Stolee~\cite{Hartke}, who examined the case $H = K_r$. The uniquely $K_3$-saturated graphs were already characterized in \cite{Cooper} since $K_3 = C_3$, but prior to this study, few examples of these graphs were known for $r > 3$. Using a computational search, Hartke and Stolee discovered several new uniquely $K_r$-saturated graphs for $4 \leq r \leq 7$ and two new infinite families of uniquely $K_r$-saturated Caley graphs. Nevertheless, uniquely $K_r$-saturated graphs appear to quite sporadic. In fact, for all $r \geq 3$, Cooper conjectured that there are only finitely many uniquely $K_r$-saturated graphs that do not contain a dominating vertex.

In this paper, we generalize the concept of uniquely $K_r$-saturated graphs by considering them within the context of hypergraphs. Specifically, for $k < r$, let $K_r^{(k)}$ denote the complete $k$-uniform hypergraph on a set of $r$ vertices. We then say that a $k$-uniform hypergraph $H = (V,E)$ is \textit{uniquely $K_r^{(k)}$-saturated} if $H$ contains no copy of $K_r^{(k)}$ and if for any $k$-set $S$ of $V(H)$ in the complement of $H$, the graph $H + S$ contains exactly one copy of $K_r^{(k)}$. Our primary goal is to determine the values of $r$, $k$, and $n$ for which uniquely $K_{r}^{(k)}$-saturated hypergraphs do or do not exist on $n$ vertices.

In our search for uniquely $K_r^{(k)}$-saturated hypergraphs on $n$ vertices, we ignore trivial examples by assuming $k < r < n$. Furthermore, in a $k$-uniform hypergraph $H$, we say that a vertex $v \in V(H)$ is a \textit{dominating vertex} if all $k$-sets of $V(H)$ that contain $v$ are hyperedges of $H$. Note that this definition generalizes the concept of dominating vertices in graphs. To see the importance of these vertices, suppose that a hypergraph $H$ is uniquely $K_r^{(k)}$-saturated and contains a dominating vertex $v$. It follows that $H - v$ is uniquely $K_{r-1}^{(k)}$-saturated. In other words, $H$ can be formed by simply adding a dominating vertex to a smaller uniquely $K_{r-1}^{(k)}$-saturated hypergraph, and it does not provide any new information regarding the characterization of uniquely $K_{r}^{(k)}$-saturated hypergraphs. Therefore, we seek hypergraphs that do not contain a dominating vertex. Similar to the definition used in \cite{Hartke}, we call such hypergraphs  \textit{primitive} uniquely $K_{r}^{(k)}$-saturated hypergraphs.

One result of this paper is that unlike the case where $k = 2$, many of these hypergraphs exist for uniformities $k \geq 4$. Specifically, we show that for any integers $k,r$ where $4 \leq k < r \leq 2k - 3$, there exists a primitive uniquely $K_{r}^{(k)}$-saturated hypergraph on $n$ vertices for all $n > r$. In order to show this, we construct the \textit{complementary hypergraph} of a uniquely $K_{r}^{(k)}$-saturated hypergraph $H$, which consists of the complements of all $k$-sets of $V(H)$ that are not hyperedges of $H$. These complementary hypergraphs provide insight into the structure and properties of uniquely $K_{r}^{(k)}$-saturated hypergraphs and offer a useful framework for proving their existence or nonexistence under certain conditions. 

Additionally, we describe the relationship between uniquely $K_r^{(k)}$-saturated hypergraphs and $\tau$-critical hypergraphs, where $\tau$ denotes the transversal number of a hypergraph as defined in $\cite{Tuza}$. Specifically, we show that $H$ is a primitive uniquely $K_r^{(k)}$-saturated hypergraph on $n$ vertices if and only if its $k$-uniform complement $H^c$ is $\tau$-critical with $\tau(H^c)=n-r+1$ and for each $S \in E(H^c)$ the hypergraph $H^c - S$ has a unique transversal of size $n-r$. We say that such a hypergraph is \textit{uniquely $\tau$-critical}.

Using this relationship, we see that if the difference $n-r$ between clique size and number of vertices in the hypergraph is bounded, then primitive uniquely $K_r^{(k)}$-saturated hypergraphs do not exist on $n$ vertices for $n$ sufficiently large. We also provide a construction of uniquely $\tau$-cricital hypergraphs to prove a range for $n$ where these hypergraphs do exist. The upper end of this range reaches approximately half of our proven upper bound, and the lower end depends on the chromatic number of certain Johnson graphs. Furthermore, we show that when $n-r=1$, the range for $n$ where these hypergraphs exist is completely determined: $k+2 \leq n \leq \frac{(k+2)^2}{4}$.

Finally, we report the results of a computational search to determine when primitive uniquely $K_r^{(k)}$-saturated hypergraphs exist for relatively small values of $k$, $r$, and $n$. The results of this search suggest that many such hypergraphs exist outside the parameter range of our contructions and offer directions for future research.

\section{Complementary Hypergraphs}
Given a $k$-uniform hypergraph $H$ on $n$ vertices, set $t = n - k$. The \textit{complementary hypergraph} of $H$ is the $t$-uniform hypergraph $R$ where $V(R) = V(H)$ and 
\begin{align*}
E(R) = \left\lbrace S \in \binom{V(H)}{t} : V(H) \setminus S \notin E(H)  \right\rbrace.
\end{align*}
That is, $E(R)$ consists of the complements of all $k$-sets of $V(H)$ that are not hyperedges of $H$. Note that this operation can easily be reversed to obtain the original hypergraph $H$ from $R$, and $H$ is the complementary hypergraph of $R$. Hence, we say that $H$ and $R$ are a pair of complementary hypergraphs. The following theorem characterizes the complementary hypergraphs of uniquely $K_{r}^{(k)}$-saturated hypergraphs and serves as a useful tool in our search for these hypergraphs.

\begin{theorem}\label{thm:complementary_hypergraphs}
Let $H$ be a $k$-uniform hypergraph on $n$ vertices. Set $t = n - k$ and for any $r > k$, set $s = r - k$. Then $H$ is a primitive uniquely $K_{r}^{(k)}$-saturated hypergraph if and only if its complementary hypergraph $R$ satisfies the following three properties:
\begin{enumerate}
\item Every $(t-s)$-set of $V(H)$ is covered by a hyperedge of $R$.
\item Every hyperedge of $R$ contains exactly one $(t-s)$-subset that is covered by no other hyperedge of $R$.
\item No vertex appears in all of $E(R)$.
\end{enumerate}
\end{theorem}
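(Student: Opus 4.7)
The plan is to translate each of the three defining features of ``primitive uniquely $K_r^{(k)}$-saturated'' into the corresponding property of $R$ via the bijection $A \mapsto V(H)\setminus A$. The crucial arithmetic fact is $n-r = t-s$, so that a $k$-subset of an $r$-set $T \subseteq V(H)$ corresponds to a $t$-superset of the $(t-s)$-set $\bar T := V(H)\setminus T$ inside $V(H)$, and a $k$-set $S \subseteq V(H)$ is a non-edge of $H$ iff $\bar S := V(H)\setminus S$ is an edge of $R$. I would open the proof by recording this dictionary explicitly, since everything else is just bookkeeping.

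Next I would translate the two ``$K_r^{(k)}$''-conditions. An $r$-set $T$ induces $K_r^{(k)}$ in $H$ precisely when every $k$-subset of $T$ is in $E(H)$, i.e.\ when no $t$-set containing $\bar T$ belongs to $E(R)$. Hence $H$ is $K_r^{(k)}$-free iff every $(t-s)$-set of $V(H)$ is covered by at least one edge of $R$, which is property 1. For a non-edge $S \notin E(H)$ and an $r$-set $T \supseteq S$, $T$ induces $K_r^{(k)}$ in $H+S$ iff every $k$-subset of $T$ other than $S$ lies in $E(H)$; translating, this says that among the $t$-supersets of $\bar T$ inside $V(H)$ the only member of $E(R)$ is $\bar S$ itself. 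So copies of $K_r^{(k)}$ in $H+S$ are in bijection with $(t-s)$-subsets $Y \subseteq \bar S$ for which $\bar S$ is the unique edge of $R$ containing $Y$. Requiring exactly one such copy for each non-edge $S$ is exactly property 2 applied to each hyperedge of $R$.

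For primitivity, a vertex $v$ is dominating in $H$ iff every $k$-set through $v$ is in $E(H)$, iff every $t$-set avoiding $v$ is outside $E(R)$, iff every hyperedge of $R$ contains $v$. Thus absence of a dominating vertex in $H$ is precisely property 3. Combining the three translations gives both implications of the theorem simultaneously, since each step was an equivalence.

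The main obstacle, such as it is, is bookkeeping: one must verify the size identities ($|\bar S|=t$, $|\bar T|=n-r=t-s$), check that complementation reverses containment, and be careful that property 1 is needed independently of property 2 (the ``no $K_r^{(k)}$ in $H$'' part of the definition is not automatic from the uniqueness condition, because the latter only refers to non-edges $S$, whereas a $K_r^{(k)}$ in $H$ itself would correspond to an uncovered $(t-s)$-set). Handling both directions by the same chain of equivalences, so that one is not tempted to re-prove anything, keeps the argument short and symmetric.
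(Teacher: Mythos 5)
Your proposal is correct and takes essentially the same approach as the paper: the same complementation dictionary $A \mapsto V(H)\setminus A$ with $n-r = t-s$, a property-by-property chain of equivalences, and in particular your correspondence between copies of $K_r^{(k)}$ in $H+S$ and $(t-s)$-subsets of $\bar S$ covered only by $\bar S$ is exactly the paper's bijection between completions of a non-edge $S$ and codegree-$1$ $(t-s)$-subsets of $S^c$. Your closing caveat---that Property 1 must be established independently because uniqueness only speaks about non-edges, while a copy of $K_r^{(k)}$ in $H$ itself corresponds to an uncovered $(t-s)$-set---matches the paper's separate treatment of Property 1 as well.
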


Throughout the following proof of this theorem, we assume that all set complement operations, denoted by $^c$, are performed within the universe $V(H)$. Furthermore, for any $(t-s)$-set $S$ of $V(H)$, define the \textit{codegree} of $S$ in $R$ to be the number of hyperedges in $R$ that contain $S$. Also, for any $k$-set $S$ of $V(H)$ that is not a hyperedge of $H$, we say that an $s$-set $T$ is a \textit{completion} of $S$ in $H$ if all $k$-sets of $S \cup T$ (with the exception of $S$) are hyperedges in $H$. Finally, for the purpose of readability, we will use the term ``edge" interchangeably with ``hyperedge".
\begin{proof}
We first show that $R$ satisfies Property 1 in Theorem~\ref{thm:complementary_hypergraphs} if and only if $H$ contains no copy of $K^{(k)}_{r}$. For the forward direction, assume that $R$ satisfies Property 1 and suppose for the purpose of contradiction that $H$ contains a copy of $K^{(k)}_{r}$. Let $S$ denote the vertex set of this subgraph. It follows that $S^c$ is a subset of $V(H)$ of size $n - r = t - s$, so by assumption, it is covered by a hyperedge of $R$. This edge must be of the form $S^c \cup T$ for some $s$-set $T \subset S$. However, this implies that the complement of this edge, which is the $k$-set $S \cap T^c$, is a non-edge of $H$, contradicting the fact that $S \cap T^c$ is a $k$-set in a copy of $K^{(k)}_{r}$ in $H$.

Conversely, suppose that $R$ does not satisfy Property 1, so that there exists a $(t-s)$-set $W$ of $V(H)$ that is not covered by any edge of $R$. Then $|W^c| = k + s = r$, and all $k$-sets of $W^c$ must be edges in $H$. Thus, $W^c$ induces a copy of $K_r^{(k)}$ in $H$.

Next, we show that $R$ satisfies Property 2 if and only if each non-edge $k$-set in $H$ has a unique completion to a copy of $K^{(k)}_{r}$ in $H$. To do this, given any non-edge $k$-set $S$ of $V(H)$, we establish a bijection between completions of $S$ in $H$ and $(t-s)$-sets of $S^c$ with codegree 1 in $R$. For the forward direction, let $T$ be a completion of $S$ in $H$. Then $(S \cup T)^c$ is a $(t-s)$-set of the edge $S^c$ in $R$. Furthermore, by the definition of completion, any non-edge of $H$ other than $S$ must contain at least one element outside of $S \cup T$. Hence, no other edge of $R$ may contain $(S \cup T)^c$, which implies that $(S \cup T)^c$ is a $(t-s)$-set of $S^c$ with codegree 1 in $R$.

For the reverse direction of the bijection, suppose that $S^c$ contains a $(t-s)$-set $W$ with codegree 1 in $R$. Take $T = S^c \setminus W$, so that $|T| = s$. We claim that $T$ is a completion of $S$ in $H$. To see this, let $S'$ be a $k$-set of $S \cup T$ containing at least one element of $T$; it suffices to show that $S'$ is an edge of $H$. Suppose for the purpose of contradiction that $S'$ is a non-edge. Note that since both $S$ and $T$ are disjoint from $W$, it holds that $S' \cap W = \emptyset$. This implies that $(S')^c$ would be another edge of $R$ that contains $W$, contradicting the fact that $W$ has codegree 1 in $R$. Therefore, $S'$ must indeed be an edge of $H$, which implies that $T$ is a completion of $S$ in $H$. Now that this bijection has been established, it follows immediately that for any non-edge $S$ of $H$, there exists a unique completion of $S$ in $H$ if and only if $S^c$ contains a unique $(t-s)$-set of codegree 1 in $R$.

Finally, we show that $R$ satisfies Property 3 if and only if $H$ contains no dominating vertex. Suppose first that $R$ satisfies property 3, and let $v \in V(H)$. Then there exists an edge $S^c$ of $R$ that does not contain $v$. Thus, the $k$-set $S$ must be a non-edge of $H$ containing $v$, so $v$ cannot be a dominating vertex in $H$. Conversely, suppose that $H$ contains no dominating vertex. Then for any $v \in V(H)$, there exists a non-edge $k$-set $S$ of $H$ that contains $v$. Hence, $v$ does not belong to the edge $S^c$ of $R$. Since $v$ was arbitrary, this implies that no vertex belongs to each edge of $R$.  
\end{proof}

The primary implication of Theorem~\ref{thm:complementary_hypergraphs} is that in order to prove or disprove the existence of a primitive uniquely $K_{r}^{(k)}$-saturated hypergraph for any combination of parameters, it suffices to search for a hypergraph $R$ that satisfies the properties of the theorem. We have found it simpler to search for these complementary hypergraphs rather than searching for uniquely $K_{r}^{(k)}$-saturated hypergraphs directly due to the fact that the properties of the theorem are easily verifiable. 

\section{The Double Star Construction}

We now employ Theorem~\ref{thm:complementary_hypergraphs} to prove the existence of primitive uniquely $K_{r}^{(k)}$-saturated hypergraphs for a wide range of parameters. For example, consider the complementary hypergraphs described in Theorem~\ref{thm:complementary_hypergraphs} when $t = 2$ and $s = 1$. They are simple graphs with no isolated vertices in which each edge has exactly one endpoint of degree 1 and where no vertex is an endpoint of every edge. These graphs are precisely the forests of multiple stars where each star has at least two edges. The smallest such forest is a double star consisting of two copies of $P_3$. By taking the complementary hypergraphs of these forests, we see that for all $n \geq 6$, there exists a primitive uniquely $K_{n-1}^{(n-2)}$-saturated hypergraph on $n$ vertices.

By generalizing the concept of a double star to hypergraphs of uniformity $k \geq 4$, we construct hypergraphs that satisfy the conditions of Theorem~\ref{thm:complementary_hypergraphs} and whose complementary hypergraphs are uniquely $K_r^{(k)}$-saturated for certain values of $k$ and $r$.  In the proof of the following theorem, if $j$ is a positive integer, we use the notation $[j]$ to denote the set $\{1,2,\ldots,j\}$.

\begin{theorem}\label{thm:double_star}
Let $k \geq 4$ be given. Then for any clique size $r$ such that $k< r \leq 2k-3$, there exists a primitive uniquely $K_{r}^{(k)}$-saturated hypergraph on $n$ vertices for every $n > r$.
\end{theorem}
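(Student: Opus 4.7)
The plan is to apply Theorem~\ref{thm:complementary_hypergraphs}: for each $n > r$, I construct a $t$-uniform hypergraph $R$ on $V = [n]$ (with $t = n - k$ and $s = r - k$) satisfying Properties~1--3, and then the $k$-uniform complement $H$ of $R$ is the required primitive uniquely $K_{r}^{(k)}$-saturated hypergraph. The construction generalizes the disjoint union of two paths $P_3$ that appeared in the graph case $k=2$, where each $P_3$ is a ``star'' with a central vertex and two leaves.

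My first step is to define $R$ explicitly. I would choose two ``spine'' sets $F_1, F_2 \subseteq V$ playing the role of the two centers, together with designated ``leaf'' vertices around each spine, and take the hyperstar edges of $R$ to be of the form $F_i$ together with a suitable leaf pattern. For $n$ beyond the minimal case, additional edges will be added so that every $(t-s)$-set is still covered. The hypothesis $r \le 2k - 3$, equivalently $s \le k - 3$, is what makes such a scheme possible: it provides enough flexibility in the sizes of the spines and leaves to accommodate the necessary overlap pattern for every $n > r$.

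Next I would verify the three properties of Theorem~\ref{thm:complementary_hypergraphs} in turn. Property~3 (no universal vertex) follows at once from the fact that $F_1 \neq F_2$ and the two hyperstars are symmetric. Property~1 (every $(t-s)$-set covered) is handled by a case analysis on how an arbitrary $(t-s)$-subset $A$ meets $F_1$ and $F_2$, using the bound $r \le 2k - 3$ to rule out the otherwise uncovered configurations. The main obstacle is Property~2: showing that each edge $E$ of $R$ contains exactly one $(t-s)$-subset of codegree~$1$. For a star edge $E = F_i \cup P$, I expect the unique tip to be the $(t-s)$-subset that uses the leaf portion $P$ maximally and the spine $F_i$ minimally; every other $(t-s)$-subset of $E$ must then be shown to lie in some sister edge, either in the same star (by replacing the leaf) or in the opposing star (via the designed spine overlap). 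Carrying out this verification uniformly in $n$, in a way that again uses $r \le 2k-3$ sharply, is where the bulk of the work lies. Once Properties~1--3 have been established, Theorem~\ref{thm:complementary_hypergraphs} yields the desired hypergraph $H$ and completes the proof.
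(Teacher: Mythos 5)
Your skeleton is the same as the paper's---build a $t$-uniform complementary hypergraph $R$ as a union of two generalized stars and invoke Theorem~\ref{thm:complementary_hypergraphs}, with the unique codegree-$1$ ``tip'' of a star edge being its full leaf set---but as written there is a genuine gap: the construction is never defined, and the two steps you defer (covering every $(t-s)$-set for all $n>r$, and the codegree bookkeeping for Property~2) are exactly where the theorem's content lies. Moreover, one structural guess you commit to is wrong: the two stars cannot be symmetric. In the paper's construction both centers have size $s$ ($[s]$ and $X=\{n-s+1,\dots,n\}$), but the leaf sets of the first star range over all of $\{s+1,\dots,n\}$ subject only to containing some element $i<n-t$, while the second star is confined to $\{n-t,\dots,n-s\}$. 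This asymmetry is forced by Property~1: a $(t-s)$-set meeting $\{1,\dots,n-t-1\}$ must be absorbable into a leaf set of the first star, so those leaves must reach across the whole vertex set; a symmetric pair of stars with leaves local to their spines leaves scattered $(t-s)$-sets uncovered once $n$ is large. Consequently your claim that Property~3 ``follows from symmetry'' would not survive a working construction; the paper argues it directly (no $i<n-t$ lies in any edge of the second star, and every $i\ge n-t$ is missed by some edge of the first).

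Your fallback of ``adding edges for larger $n$'' is also in tension with Property~2: each added edge needs its own private $(t-s)$-subset and must not give a second cover to the private subsets of existing edges. The paper avoids this by one uniform definition valid for all $n>r$, and the hypothesis $r\le 2k-3$, i.e.\ $k\ge s+3$, is used at a single sharp point that your sketch never identifies: to show a non-tip $(t-s)$-subset $S'$ of a first-star edge with $S'\setminus[s]$ containing no element below $n-t$ has codegree at least $2$, one must build a second edge $[s]\cup T\supseteq S'$ whose leaf set $T$ contains some element of the interval $(s,\,n-t)$ and differs from the original leaf set; $k\ge s+3$ guarantees two available choices ($s+1$ and $s+2$), so $T\neq S$ can always be arranged. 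Saying ``$s\le k-3$ provides enough flexibility'' gestures at this but does not locate or verify it, so the proposal is a plan with the same architecture as the paper's proof rather than a proof.
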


\begin{proof}
Set $t = n - k$ and set $s = r - k$, noting that $r \leq 2k-3$ implies $k \geq s + 3$. It will suffice to find a $t$-uniform hypergraph $R$ on $n$ vertices that satisfies the three properties of Theorem~\ref{thm:complementary_hypergraphs}. To do so, assume $V(R) = [n]$ and consider the following two families $A,B$ of $t$-sets of $[n]$:
\begin{align*}
A &= \{ [s] \cup S : S \text{ is a $(t-s)$-set of $\{s+1,...,n\}$ containing some
 element $i < n-t$} \}, \\
B &= \{ X \cup S : S \text{ is a $(t-s)$-set of $\{n-t,...,n-s\}$ and $X = \{n-s+1,...,n\}$} \}.
\end{align*}
Hence, $B$ consists of all $t$-sets of $\{n-t,...,n\}$ containing the $s$-set $X = \{n-s+1,...,n\}$ as a subset. Note that $A$ resembles a $t$-uniform ``star" with center $[s]$ and that $B$ resembles a star with center $X$. We claim that taking $E(R) = A \cup B$ results in the desired hypergraph $R$.

To prove that $R$ has Property 1, let $W$ be a $(t-s)$-set of $[n]$. If $W$ contains any $i < n - t$, then $W \setminus [s]$ must be a subset of at least one $(t-s)$-set $S$ described in the definition of $A$. Hence, $W$ is covered by the edge $[s] \cup S$ of $R$. On the other hand, if $W$ is a subset of $\{n-t,...,n\}$, then $W \setminus X$ must be a subset of at least one $(t-s)$-set $S$ described in the definition of $B$, implying that $W$ is covered by the edge $X \cup S$ of $R$.

Next, to prove that $R$ has Property 2, first consider an edge $E = [s] \cup S$ from $A$. We claim that $S$ is the unique $(t-s)$-set of $E$ with codegree 1 in $R$. To see this, note that $S$ cannot be contained in any other member of $A$, and since $S$ contains some $i < n-t$, it is not contained in any member of $B$. Hence, $S$ indeed has codegree 1. Now let $S'$ be any other $(t-s)$-set of $E$. It then holds that $S'$ contains some element of $[s]$, so that $|S' \setminus [s]| < t-s$. If $S' \setminus [s]$ still contains some $i < n-t$, then it is possible to construct some edge $E' = [s] \cup T$ of $A$ where $S'\setminus [s] \subset T$ by simply choosing elements of $\{n-t,...,n\}$ to make up $T \setminus S'$ in such a way that $T \neq S$. It would then follow that $S' \subset E'$, or that $S'$ has codegree at least 2. 

However, if $S' \setminus [s]$ contains no element $i < n-t$, then in order for $E' = [s] \cup T$ to belong to $A$, we must make sure to include some element $i$, where  $s < i < n-t$, when constructing the $(t-s)$-set $T$. Furthermore, we must also ensure that $T \neq S$. To do this, note that since $n-t = k \geq s+3$, at least two elements ($s+1$ and $s+2$) belong to this interval. Hence, we can select one of $\{s+1, s+2\}$ to belong to $T$, include $S' \setminus [s]$ in $T$, and select the remaining elements of $T$ from $\{n-t,...,n\}$ in such a way that $T \neq S$. It would then follow that $E' \in A$ and that $S' \subset E'$, or that $S'$ has codegree at least 2 in $R$. Therefore, $S$ must indeed be the unique $(t-s)$-set of $E$ having codegree 1 in $R$.

The other case that must be examined to show that $R$ has Property 2 is an edge of form $E = X \cup S$ from $B$. Again, we claim that $S$ is the unique $(t-s)$-set of $E$ with codegree 1 in $R$. Clearly $S$ is not contained in any other members of $B$, and since all elements of $S$ are at least $n-t$, it is not contained in any members of $A$. Thus, $S$ has codegree 1 in $R$. To show that it is unique, let $S'$ be any other $(t-s)$-set of $E$. Then $S'$ contains some member of $X$, implying that $|S' \setminus X| < t-s$. It is now easy to find another edge $E' = X \cup T$ containing $S'$ by simply letting $T$ contain $S' \setminus X$ and other elements of $\{n-t,...,n-s\}$ in such a way that $T \neq S$. This follows from the fact that $|\{n-t,...,n-s\}| = t-s+1 \geq |S' \setminus X| + 2$. Thus, $S'$ has codegree at least 2 in $R$, which implies that $E$ contains exactly one $(t-s)$-set of codegree 1 in $R$ and completes the proof of Property 2.

Lastly, it remains to be shown that no element of $[n]$ belongs to all of $E(R)$, but this is trivial. To see this, note that no $i < n-t$ belongs to any member of $B$. Furthermore, for any $i \geq n-t$, there clearly exists a member of $A$ that does not contain $i$. 
\end{proof}

One implication of this theorem is that for all $k \geq 4$, there are infinitely many primitive uniquely $K_{k+1}^{(k)}$-saturated hypergraphs (at least one on $n$ vertices for each $n \geq k+2$). This is rather surprising considering that for $k = 2$, the primitive uniquely $K_{k+1}^{(k)}$-saturated hypergraphs are precisely the Moore graphs of diameter two \cite{Cooper}, and there are only finitely many such graphs \cite{Hoffman}.

\section{Uniquely $\tau$-critical Hypergraphs}

In addition to the complementary hypergraphs described in Theorem~\ref{thm:complementary_hypergraphs}, uniquely $K_r^{(k)}$-saturated hypergraphs are also closely related to $\tau$-critical hypergraphs where $\tau(H)$ denotes the transversal number of a hypergraph $H$. A \textit{transversal} of $H$ is a subset of $V(H)$ that intersects each hyperedge of $H$. The \textit{transversal number} $\tau(H)$ of $H$ is the minimum size of a transversal of $H$. Furthermore, $H$ is said to be $\tau$-\textit{critical} if the removal of any hyperedge from $H$ decreases its transversal number. Based on these definitions, we say that $H$ is \textit{uniquely $\tau$-critical} if $H$ is $\tau$-critical and for any $S \in E(H)$, the hypergraph $H - S$ contains a unique minimum transversal (which necessarily has size $\tau(H) - 1$). 

Uniquely $\tau$-critical hypergraphs are intimately related with uniquely $K_r^{(k)}$-saturated hypergraphs, as shown in the following theorem.

\begin{theorem}\label{thm:tau_critical}
Let $H$ be a $k$-uniform hypergraph on $n$ vertices and let $H^c$ denote the $k$-uniform hypergraph with vertex set $V(H)$ and edge set $\binom{V(H)}{k} \setminus E(H)$. Then $H$ is a primitive uniquely $K_r^{(k)}$-saturated hypergraph if and only if $H^c$ is a uniquely $\tau$-critical hypergraph with no isolated vertices where $\tau(H^c) = n - r + 1$. 
\end{theorem}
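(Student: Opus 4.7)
The plan is to set up the bijective correspondence between transversals of $H^c$ and induced complete sub-hypergraphs of $H$, and then translate each of the three clauses in the definition of ``primitive uniquely $K_r^{(k)}$-saturated'' into the corresponding property of $H^c$. Specifically, I would observe that $T\subseteq V(H)$ is a transversal of $H^c$ exactly when every $k$-subset of $V(H)\setminus T$ is an edge of $H$, i.e.\ when $V(H)\setminus T$ induces a $K_{|V(H)\setminus T|}^{(k)}$ in $H$. So transversals of size $m$ are in one-to-one correspondence with cliques (complete sub-hypergraphs) of size $n-m$ in $H$.

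With this dictionary in hand, the argument proceeds in three steps. First, $H$ is $K_r^{(k)}$-free iff $H^c$ has no transversal of size $\le n-r$, equivalently $\tau(H^c)\ge n-r+1$. Second, for any $S\in E(H^c)$ (that is, any non-edge $k$-set of $H$), a transversal $T$ of $H^c-S$ of size $n-r$ is exactly a set $V(H)\setminus T$ of size $r$ in which every $k$-subset except possibly $S$ lies in $E(H)$; since $H$ has no $K_r^{(k)}$, such a $V(H)\setminus T$ must contain $S$ and thus corresponds to a copy of $K_r^{(k)}$ in $H+S$, and conversely. Hence unique saturation at $S$ is equivalent to $H^c-S$ having a unique transversal of size $n-r$; combined with the first step this forces $\tau(H^c-S)=n-r$, so $\tau(H^c)=n-r+1$ and $H^c$ is $\tau$-critical with unique minimum transversals. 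Third, a vertex $v$ is dominating in $H$ iff every $k$-set containing $v$ is in $E(H)$ iff $v$ lies in no edge of $H^c$, so ``primitive'' is the same as ``$H^c$ has no isolated vertex.''

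For the converse direction I would run the same bijections in reverse: the hypothesis $\tau(H^c)=n-r+1$ directly gives that $H$ is $K_r^{(k)}$-free; for each $S\in E(H^c)$, uniqueness of the minimum transversal of $H^c-S$ (of size $n-r$) translates, via the correspondence above and the $K_r^{(k)}$-freeness of $H$, into the existence of exactly one $K_r^{(k)}$ in $H+S$; and every $k$-set $S\notin E(H)$ (of which there is at least one since $\tau(H^c)>0$, noting the hypothesis $r<n$) is an edge of $H^c$, so saturation covers all non-edges of $H$. Finally, no isolated vertex of $H^c$ translates back to no dominating vertex in $H$.

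The only subtle point, and the place I would be most careful, is verifying that in the forward direction $\tau(H^c)$ equals $n-r+1$ rather than just being $\ge n-r+1$: this uses that $H$ is not complete (so $E(H^c)$ is nonempty, which is guaranteed since $H$ is $K_r^{(k)}$-free and $r<n$) together with the general inequality $\tau(H^c-S)\ge \tau(H^c)-1$ and the explicit bound $\tau(H^c-S)\le n-r$ obtained from the completion of $S$. Everything else is a routine unpacking of definitions through the transversal/clique correspondence, so no genuine obstacle is expected.
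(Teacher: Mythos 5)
Your proposal is correct and takes essentially the same route as the paper: both rest on the dictionary that $T\subseteq V(H)$ is a transversal of $H^c$ exactly when $V(H)\setminus T$ induces a clique in $H$, and then translate $K_r^{(k)}$-freeness, unique saturation at each non-edge $S$, and primitivity into the three clauses about $H^c$. Your extra care in pinning down $\tau(H^c)=n-r+1$ exactly (via $\tau(H^c)\le\tau(H^c-S)+1\le n-r+1$ and the nonemptiness of $E(H^c)$) fills in a detail the paper leaves implicit, but it is the same argument.
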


\begin{proof}
Note that a set $S \subseteq V(H)$ is a transversal of $H^c$ if and only if $V(H) \setminus S$ induces a clique in $H$. Hence, $H$ being $K_r$-free is equivalent to $H^c$ having no transversal of size $n - r$. It also follows that for any $S \in \binom{V(H)}{k} \setminus E(H)$, the hypergraph $H + S$ contains a (unique) copy of $K_r$ if and only if its complement $H^c - S$ contains a (unique) transversal of size $n - r$. Finally, note that $H$ has no dominating vertex if and only if each vertex of $V(H)$ is covered by some edge of $H^c$.
\end{proof}

Although the concept of uniquely $\tau$-critical hypergraphs is new, $\tau$-critical hypergraphs were first introduced by Erd\H{o}s and Gallai in 1961 \cite{Gallai}. Various results have been proven regarding $\tau$-critical hypergraphs, including bounds on the size of their vertex set. These bounds also apply to clique-saturated hypergraphs, and hence, uniquely $K_r^{(k)}$-saturated hypergraphs. The strongest known upper bound is provided by Tuza:

\begin{theorem}[Tuza \cite{Tuza}, 1989]\label{thm:tuza}
The maximum number of vertices in a $k$-uniform, $\tau$-critical hypergraph with no isolated vertices and with $\tau(H) = \tau$ is less than
\begin{align*}
\binom{k + \tau - 1}{k-1} + \binom{k + \tau - 2}{k-1}.
\end{align*}
\end{theorem}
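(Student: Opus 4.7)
The plan is to derive the bound from a Bollob\'as-type skew set-pair inequality applied to pairs of subsets of $V(H)$ indexed by the vertices of $H$, exploiting the private transversals that $\tau$-criticality supplies.

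First I would extract private transversals. Since $H$ is $\tau$-critical, for every edge $e \in E(H)$ the hypergraph $H - e$ has a transversal of size $\tau - 1$; fix such a $T_e$. This $T_e$ is automatically disjoint from $e$, because otherwise $T_e$ would already be a transversal of $H$, contradicting $\tau(H) = \tau$. In particular $T_e$ meets every edge of $H$ other than $e$.

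Next I would attach a pair of disjoint sets to each vertex. Because $H$ has no isolated vertices, every $v \in V(H)$ lies in some edge; fix one such edge $e_v \ni v$ and define
\[
A_v := e_v \setminus \{v\}, \qquad B_v := T_{e_v} \cup \{v\}.
\]
Then $A_v \cap B_v = \emptyset$, with $|A_v| = k-1$ and $|B_v| = \tau$. The skew condition $A_v \cap B_w \ne \emptyset$ holds for most distinct $v, w$: if $w \in e_v$ then $w \in A_v \cap B_w$; otherwise $e_v \ne e_w$, forcing $T_{e_w} \cap e_v \ne \emptyset$, and this intersection lies inside $A_v = e_v \setminus \{v\}$ unless the exceptional configuration $v \in T_{e_w}$ with $T_{e_w} \cap e_v = \{v\}$ occurs.

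Applying Bollob\'as's two-families inequality to the vertices for which the skew condition is in force gives at most $\binom{(k-1) + \tau}{k-1} = \binom{k+\tau-1}{k-1}$ of them. The exceptional vertices would be bounded by a parallel Bollob\'as application on the trimmed pairs $(A_v, T_{e_v})$ of sizes $(k-1, \tau-1)$, contributing at most $\binom{k+\tau-2}{k-1}$; summing yields the claimed bound. I expect the main obstacle to lie in executing this decomposition cleanly so that the two Bollob\'as counts genuinely add up, and in extracting the strict ``less than'' from the fact that the rigid equality configurations of the two inequalities cannot simultaneously hold. A subtler alternative, which may be cleaner, is to order the vertices in advance and choose the edges $e_v$ greedily to route around the pathological case, so that a single Bollob\'as application suffices and the second binomial term arises only from the boundary of the ordering.
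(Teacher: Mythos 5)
First, a framing note: the paper does not prove this statement at all---it is quoted verbatim from Tuza's 1989 paper---so there is no internal proof to compare against, and your attempt has to stand on its own. It does not, for two concrete reasons, both located exactly where you yourself flag ``the main obstacle.''

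The first gap is that the condition your set pairs actually satisfy is too weak for Bollob\'as. What your construction gives, after the case analysis you correctly carry out, is at best the \emph{weak} (symmetric) condition: for $v \neq w$, at least one of $A_v \cap B_w \neq \emptyset$, $A_w \cap B_v \neq \emptyset$ holds---and even this can fail, since the doubly bad configuration $e_v \neq e_w$, $w \notin e_v$, $v \notin e_w$, $T_{e_w} \cap e_v = \{v\}$, $T_{e_v} \cap e_w = \{w\}$ is not excluded by anything you prove. The skew Bollob\'as inequality requires a single linear order in which $A_i \cap B_j \neq \emptyset$ for all $i < j$, and the weak condition does not supply one: the ``forbidden direction'' relation can contain cycles. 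Indeed the weak condition genuinely admits larger families than $\binom{a+b}{a}$; already for $a = b = 1$ the three pairs $(\{1\},\{2\})$, $(\{2\},\{3\})$, $(\{3\},\{1\})$ satisfy it while $\binom{2}{1} = 2$. So the phrase ``the vertices for which the skew condition is in force'' is not well defined (the condition is a property of pairs, not of vertices), and no selection of a ``good'' class with the needed linear order has been exhibited.

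The second gap is that your rescue mechanism for the exceptional vertices cannot work as described: shrinking $B_v = T_{e_v} \cup \{v\}$ to the trimmed set $T_{e_v}$ makes the cross-intersection condition strictly \emph{harder}, not easier. If a pair $(v,w)$ is bad in the original system, then $T_{e_w} \cap e_v \subseteq \{v\}$, which already forces $A_v \cap T_{e_w} = \emptyset$, so the same pair is bad in the trimmed system; and if two exceptional vertices $v, v'$ happen to share the chosen edge $e_v = e_{v'} = e$, then $T_e$ is disjoint from $e \supseteq A_v \cup A_{v'}$ and the trimmed cross-intersections are empty in \emph{both} directions. So the second binomial term $\binom{k+\tau-2}{k-1}$ cannot be harvested by a ``parallel Bollob\'as application'' on $(A_v, T_{e_v})$. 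Handling precisely these configurations is the actual content of Tuza's theorem; his proof runs through a refined theory of intersecting set-pair systems built for $\tau$-critical hypergraphs, not through a naive two-class split, and your closing suggestion of choosing the edges $e_v$ greedily along a pre-ordering is, as stated, a conjecture rather than an argument. (The extraction of strict inequality is also unaddressed, but that is minor next to the above.)
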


\noindent In fact, this bound is best possible apart from a constant factor. 

When translating this result to $K^{(k)}_r$-saturated hypergraphs on $n$ vertices, whose associated $\tau$-critical hypergraphs have transversal number $n - r + 1$, we see that the bound is a function of $k$ and $\ell := n - r$. Fixing $\ell$ and $k$, we obtain an upper bound on $n$. 

\begin{theorem}\label{thm:upper_bound}
Fix $k \geq 2$. Then if the difference between the number of vertices $n$ and the clique size $r$ is fixed at some constant $\ell \geq 1$, there exist no primitive (uniquely) $K^{(k)}_{r}$-saturated hypergraphs on $n$ vertices for any $n$ at least
\begin{align*}
\binom{k + \ell}{k-1} + \binom{k + \ell - 1}{k-1}.
\end{align*}
\end{theorem}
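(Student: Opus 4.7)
The plan is to directly combine Theorem~\ref{thm:tau_critical} with Tuza's bound (Theorem~\ref{thm:tuza}). Theorem~\ref{thm:tau_critical} tells us that a primitive uniquely $K_r^{(k)}$-saturated hypergraph $H$ on $n$ vertices corresponds bijectively to a $k$-uniform uniquely $\tau$-critical hypergraph $H^c$ with no isolated vertices and with $\tau(H^c) = n - r + 1$. A uniquely $\tau$-critical hypergraph is in particular $\tau$-critical, so Tuza's upper bound applies to $H^c$.

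First I would suppose, for contradiction, that such a primitive uniquely $K_r^{(k)}$-saturated hypergraph $H$ exists on $n$ vertices, where $\ell = n - r$ is fixed. By Theorem~\ref{thm:tau_critical}, its complement $H^c$ is a $k$-uniform $\tau$-critical hypergraph with no isolated vertices and transversal number $\tau(H^c) = n - r + 1 = \ell + 1$. Next I would apply Theorem~\ref{thm:tuza} with $\tau = \ell + 1$, which yields the strict inequality
\begin{align*}
n < \binom{k + \tau - 1}{k-1} + \binom{k + \tau - 2}{k-1} = \binom{k + \ell}{k-1} + \binom{k + \ell - 1}{k-1}.
\end{align*}
This contradicts the assumption $n \geq \binom{k+\ell}{k-1} + \binom{k+\ell-1}{k-1}$, completing the argument.

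There is essentially no obstacle here; the theorem is an immediate corollary once the correspondence of Theorem~\ref{thm:tau_critical} is in hand, since unique $\tau$-criticality is a strictly stronger condition than $\tau$-criticality and therefore inherits Tuza's bound. The only thing to check is that the substitution of the parameter $\tau = \ell + 1$ lines up correctly with the exponents and binomial indices appearing in the statement, which it does.
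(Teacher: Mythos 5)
Your proof is correct and follows exactly the paper's own route: invoke Theorem~\ref{thm:tau_critical} to pass to the complement $H^c$, which is $\tau$-critical with no isolated vertices and $\tau(H^c) = \ell + 1$, and then apply Tuza's bound (Theorem~\ref{thm:tuza}) with $\tau = \ell + 1$. The parameter substitution and the observation that unique $\tau$-criticality implies $\tau$-criticality match the paper's two-line argument, so there is nothing to add.
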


\begin{proof}
By Theorem~\ref{thm:tau_critical}, the complement $H^c$ of a primitive $K_{n - \ell}^{(k)}$-saturated hypergraph $H$ is a $\tau$-critical hypergraph for $\tau = n - (n - \ell) + 1 = \ell + 1$. The bound now follows from Theorem~\ref{thm:tuza}.
\end{proof}

Thus, we see that when $n - r$ is fixed for any uniformity $k \geq 2$, only finitely many primitive uniquely $K_r^{(k)}$-saturated hypergraphs exist. This is a contrast to Theorem~\ref{thm:double_star}, which states that as long as the clique size is not too large, infinitely many primitive uniquely $K_r^{(k)}$-saturated hypergraphs exist for all uniformities $k \geq 4$. 

A natural question to be asked is that of the sharpness of Theorem~\ref{thm:upper_bound}, or if a sharper upper bound can be obtained. For general $\ell \geq 1$, we offer a construction to prove a range for $n$ where primitive uniquely $K_{n-\ell}^{(k)}$-saturated hypergraphs exist. The upper end of this range is approximately half of the bound given by Theorem~\ref{thm:upper_bound}, and the lower end depends on the chromatic number of certain Johnson graphs.

The well known Johnson graph $J(m,k)$ has vertex set ${[m]\choose k}$ with two vertices adjacent if and only if the corresponding $k$-sets intersect in $k-1$ vertices.  Let $\chi(m,k)$ denote the chromatic number of $J(m,k)$; this parameter is well investigated and relates to constant weight codes. Graham and Sloane \cite{gs-80} proved that $\chi(m,k)\le m$ for all $0<k\le m$. Obviously $\chi(m,2)$ is the chromatic index of the complete graph $K_m$, equal to $m-1$ or $m$ according to $m \equiv 0$ or $m\equiv 1$ $\pmod 2$.  The construction in the proof of the following theorem provides uniquely $\tau$-critical hypergraphs for the stated range.

\begin{theorem}\label{thm:tau_construction} Assume that $k\ge 3$, $\ell\ge 1$, and $n$ are integers that satisfy the following inequality:  $$\chi(\ell+k-1,k-1)+\ell+k-1\le n \le {\ell+k-1\choose k-1}+\ell+k-1.$$
 Then there is a uniquely $\tau$-critical $k$-uniform hypergraph $H$ with $n$ vertices and with $\tau(H)=\ell+1$.
\end{theorem}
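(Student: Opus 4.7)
The plan is to build $H$ on $V(H)=[m]\cup U$, where $m:=\ell+k-1$ and $|U|=c:=n-m$, using a proper coloring of the Johnson graph $J(m,k-1)$ to decide the edges. The range hypothesis is precisely $\chi(m,k-1)\leq c\leq \binom{m}{k-1}$, so we can fix a partition $\{\mathcal{F}_u\}_{u\in U}$ of $\binom{[m]}{k-1}$ into $c$ nonempty classes, each independent in $J(m,k-1)$: start from a minimum proper coloring and split larger classes until the number of classes equals $c$. Declare $E(H)=\{\{u\}\cup A : u\in U,\; A\in\mathcal{F}_u\}$. The intended minimum transversal of $H-e$ for $e=\{u_e\}\cup B$ will be $T^*_B:=[m]\setminus B$, which has size $\ell$.

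To verify $\tau(H)=\ell+1$: the upper bound is witnessed by $([m]\setminus A)\cup\{u\}$ for any $A\in\mathcal{F}_u$, using that distinct color classes force every other edge $\{u'\}\cup A'$ to have $A'\neq A$ and hence to meet $[m]\setminus A$. For the lower bound, given any transversal $T$ let $T_0=T\cap[m]$ and $C=[m]\setminus T_0$; each $A\in\binom{C}{k-1}$ is disjoint from $T_0$, so the edge $\{u_A\}\cup A$ forces $u_A\in T\cap U$. Therefore $|T\cap U|\geq |U_C|$, where $U_C$ is the number of colors occurring on $\binom{C}{k-1}$. The restriction of the proper coloring to $\binom{C}{k-1}$ is a proper coloring of the induced subgraph $J(|C|,k-1)$, whose clique number is $|C|-k+2$, so $|U_C|\geq |C|-k+2$ and $|T|\geq (m-|C|)+(|C|-k+2)=\ell+1$.

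For unique $\tau$-criticality, fix $e=\{u_e\}\cup B$. That $T^*_B$ is a transversal of $H-e$ of size $\ell$ is immediate, since every edge $\{u'\}\cup A'\neq e$ satisfies $A'\neq B$ (partition), hence $A'\cap T^*_B\neq\emptyset$. For uniqueness, rerun the lower-bound argument with edge $e$ excluded: the single pair $(u_e,B)$ may be omitted from the forcing, yielding $|T\cap U|\geq |U_C|-1$ when $B\subset C$ and $|T\cap U|\geq |U_C|$ otherwise. Combined with $|U_C|\geq|C|-k+2$, any transversal of $H-e$ of size exactly $\ell$ either equals $T^*_B$ or falls into a \emph{tight case}: $|C|\geq k$, $B\subsetneq C$, the clique bound is met ($|U_C|=|C|-k+2$), and $\mathcal{F}_{u_e}\cap\binom{C}{k-1}=\{B\}$.

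The main obstacle is excluding this tight case, which requires choosing the coloring with care. At the upper endpoint $c=\binom{m}{k-1}$ the coloring is forced to be all singletons and then $|U_C|=\binom{|C|}{k-1}>|C|-k+2$ for $k\geq 3$ and $|C|\geq k$, so no tight case arises and the construction works immediately. For smaller $c$, build the coloring by iteratively merging two singleton classes $\{A\},\{A'\}$ satisfying $|A\cap A'|\leq k-3$; such pairs exist as long as $m\geq k+1$, i.e., $\ell\geq 2$ (and the edge case $\ell=1$ is covered by the singleton coloring alone). Each merge preserves properness while enlarging a class from a singleton to a doubleton, exactly the enlargement required to plant a second element of $\mathcal{F}_{u_e}$ inside $\binom{C}{k-1}$ at every potentially tight configuration. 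The delicate part of the argument is orchestrating these merges so that the coloring reaches the desired size $c$ while simultaneously killing every candidate tight pair $(C,u_e)$; the lower endpoint $c=\chi(m,k-1)$ is exactly what is needed for this process to terminate at a valid proper coloring.
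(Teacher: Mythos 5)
Your construction is the same as the paper's (partition $\binom{[m]}{k-1}$, $m=\ell+k-1$, into $c=n-m$ nonempty classes that are independent in $J(m,k-1)$, obtained by splitting an optimal coloring, and attach one new vertex per class), and your lower bound $\tau(H)\ge\ell+1$ via the chromatic number of the induced Johnson subgraph on $\binom{C}{k-1}$ is correct (your claim that the clique number of $J(|C|,k-1)$ \emph{equals} $|C|-k+2$ is off when $|C|<2k-2$, since the simplex cliques have size $k$, but you only use the inequality $|U_C|\ge |C|-k+2$, which holds). The genuine gap is at the decisive uniqueness step: you isolate the ``tight case'' ($B\subsetneq C$, $|U_C|=|C|-k+2$, $\mathcal{F}_{u_e}\cap\binom{C}{k-1}=\{B\}$) and then do not rule it out, offering only an unexecuted merging scheme whose ``delicate part'' you explicitly leave open. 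Moreover that scheme cannot work as described: merging pairs of singleton classes removes one class per merge and produces only classes of size two, so it can never reduce the number of classes below $\binom{m}{k-1}/2$, whereas the lower endpoint $\chi(m,k-1)$ is typically far smaller (e.g.\ $\chi(m,2)\le m$ versus $\binom{m}{2}/2$); also, your first paragraph's split-an-optimal-coloring device already reaches every $c$ in the stated range, so no merging is needed for the construction itself.

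The missing idea --- and exactly how the paper closes this step --- is a packing count showing the tight case is \emph{vacuous for every proper coloring}, so no careful choice is required. Write $|C|=k-1+h$ with $h\ge 1$. Since each class is independent in $J(m,k-1)$, any two of its members intersect in at most $k-3$ points, so every $(k-2)$-subset of $C$ lies in at most one member of a fixed class restricted to $\binom{C}{k-1}$; hence each class contributes at most $\binom{k+h-1}{k-2}/(k-1)$ sets there (this is the paper's estimate $x_b\le\binom{k+h-1}{k-2}/(k-1)$, phrased for the transversal rather than the coloring). If only $h+1=|C|-k+2$ colors appear on $\binom{C}{k-1}$, the $h$ colors other than $u_e$ cover at most $h\binom{k+h-1}{k-2}/(k-1)=\frac{h}{h+1}\binom{k+h-1}{k-1}$ of the $\binom{k+h-1}{k-1}$ sets, leaving at least $\binom{k+h-1}{k-1}/(h+1)\ge\binom{h+2}{2}/(h+1)=(h+2)/2>1$ sets colored $u_e$ (using $k\ge 3$, $h\ge 1$). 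So $\mathcal{F}_{u_e}\cap\binom{C}{k-1}$ contains a second set besides $B$, which forces $u_e\in T$ and $|T|\ge\ell+1$, killing the tight case. Inserting this count into your argument completes the proof for an arbitrary proper coloring, and it is in substance identical to the paper's computation showing that a size-$\ell$ transversal meeting the attached vertices would leave more than one edge uncovered.
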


\begin{proof} Set $u=\chi(\ell+k-1,k-1)$ and $w=n-u-(\ell+k-1)$. Let $H$ be a  hypergraph whose vertex set is $A\cup B$ for two disjoint sets $A,B$ where $|A|=\ell+k-1$ and $|B|=u+w$. Observe that $|A|+|B|=n$ and thus $H$ has $n$ vertices. From the definition of $u$, the $(k-1)$-element subsets of $A$ can be partitioned into $u$ classes $A_1,\dots,A_u$ so that  in every $A_i$ no two sets intersect in $k-2$ elements. We can obviously refine this partition into a partition $A'_1,\dots,A'_{u+w}$, keeping the property that in every $A'_i$ no two sets intersect in $k-2$ elements and all $A'_i$-s are nonempty. Assume $B=\{b_1,\dots,b_{u+w}\}$ and let $E_i$ be the family of sets obtained by adding $b_i$ to all $(k-1)$-element sets of $A'_i$. The edge set of $H$ is then defined to be $\cup_{i=1}^{u+w} E_i$.

Clearly $H$ is a $k$-uniform hypergraph on $n$ vertices. Since any $\ell+1$ vertices of $A$ is a transversal of $H$, $\tau(H)\le \ell+1$. On the other hand, for any edge $e$ of $H$, the set $A\setminus e$ is an $\ell$-element transversal of $H-e$.  We show that this is in fact the only $\ell$-element transversal of $H-e$, which completes the proof.

Assume that $T$ is a transversal of $H-e$, $|T|=\ell$. Set $$\hat{B}=T\cap B,\  \hat{A}=A\setminus (T\cap A).$$ Assume that $|\hat{B}|=h>0$; then $|\hat{A}|=k-1+h$. For any $b\in \hat{B}$ let $x_b$ be the number of edges of $H-e$ containing $b$ and intersecting $\hat{A}$ in $k-1$ elements. Since no two edges containing $b$ intersect in $k-2$ elements of $A$, $|x_b|\le {k+h-1\choose k-2}/(k-1)$. Thus at most $h{k+h-1\choose k-2}/(k-1)$ edges of $H$ have a vertex in $\hat{B}$ and all other vertices in $\hat{A}$. Since all $(k-1)$-element subsets of $\hat{A}$ are in a unique edge of $H$,
the number of edges of $H$ that are disjoint from $T$ is at least $${k+h-1\choose k-1}- h{k+h-1\choose k-2}/(k-1)={k+h-1\choose k-1}{1\over h+1})\ge {h+2\choose 2}{1\over h+1}>1$$
since $k\ge 3$ and $h>0$. This is a contradiction because only the edge $e$ can be disjoint from $T$. Thus $h=0$, so $T\subset A$. No $\ell$-set of $A$ can be a transversal of $H$, so it most hold that $T\cap e=\emptyset$. Therefore, we have $T=A\setminus e$.
\end{proof}

In terms of uniquely saturated hypergraphs, Theorems~\ref{thm:tau_critical} and \ref{thm:tau_construction} imply the following.

\begin{theorem}\label{thm:tau_cosntruction_hypergraphs}
Let $k,r,n$ be integers where $3 \leq k < r < n$ that satisfy the following inequality:  $$\chi(n-r+k-1,k-1)+n-r+k-1\le n \le {n-r+k-1\choose k-1}+n-r+k-1.$$
Then there exists a primitive uniquely $K_r^{(k)}$-saturated hypergraph on $n$ vertices.
\end{theorem}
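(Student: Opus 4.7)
The plan is to obtain Theorem~\ref{thm:tau_cosntruction_hypergraphs} as an immediate corollary of Theorems~\ref{thm:tau_critical} and~\ref{thm:tau_construction}, since these two theorems were designed to dovetail. First I would set $\ell = n-r$, which is a positive integer because the hypothesis $r < n$ forces $\ell \ge 1$; under this substitution the two-sided inequality in the statement of Theorem~\ref{thm:tau_cosntruction_hypergraphs} coincides exactly with the hypothesis of Theorem~\ref{thm:tau_construction}. Since $k \ge 3$, applying Theorem~\ref{thm:tau_construction} with parameters $k$ and $\ell = n-r$ therefore produces a uniquely $\tau$-critical $k$-uniform hypergraph $H'$ on $n$ vertices with $\tau(H') = \ell + 1 = n - r + 1$.

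The second step is to verify that $H'$ has no isolated vertices, as this is an explicit hypothesis needed to apply Theorem~\ref{thm:tau_critical}. Inspecting the construction in the proof of Theorem~\ref{thm:tau_construction}, we have $V(H') = A \cup B$ with $|A| = \ell+k-1$ and $|B| = u+w$, and the edge set is $\bigcup_{i=1}^{u+w} E_i$, where each $E_i$ is formed by attaching $b_i \in B$ to every $(k-1)$-subset in the refined class $A'_i$. Every $A'_i$ is nonempty by construction, so each $b_i$ lies in at least one edge; and every $a \in A$ lies in $\binom{\ell+k-2}{k-2} > 0$ many $(k-1)$-subsets of $A$, each of which sits in a unique class $A'_i$ and thus contributes an edge containing $a$. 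Hence $H'$ has no isolated vertices.

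Finally, I would take $H = (H')^c$, the $k$-uniform complement of $H'$. Then $H^c = H'$ is uniquely $\tau$-critical with no isolated vertices and satisfies $\tau(H^c) = n-r+1$, so the reverse direction of Theorem~\ref{thm:tau_critical} implies that $H$ is a primitive uniquely $K_r^{(k)}$-saturated hypergraph on $n$ vertices. The main obstacle is really just the bookkeeping of matching the inequality under the substitution $\ell = n-r$ and confirming the isolated-vertex condition; both are immediate, so the theorem follows with essentially no new combinatorial work beyond what Theorems~\ref{thm:tau_critical} and~\ref{thm:tau_construction} already supply.
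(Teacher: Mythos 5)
Your proof is correct and is exactly the paper's route: the paper derives Theorem~\ref{thm:tau_cosntruction_hypergraphs} as an immediate consequence of Theorems~\ref{thm:tau_critical} and~\ref{thm:tau_construction} under the substitution $\ell = n-r$, offering no further argument. Your explicit check that the construction from Theorem~\ref{thm:tau_construction} has no isolated vertices (every $b_i$ is used since each $A'_i$ is nonempty, and every vertex of $A$ lies in some $(k-1)$-subset of $A$, hence in some edge) is a detail the paper leaves implicit, and you verify it correctly.
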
 

Using that  $\chi(m,k)\le m$ \cite{gs-80} and that $\chi(m,2)$ is the chromatic index of $K_m$, Theorem~\ref{thm:tau_cosntruction_hypergraphs} yields the following corollaries.

\begin{corollary}\label{cor:1}
Let $k,r,n$ be integers where $3 \leq k < r < n$ that satisfy the following inequality:  $$2(n-r+k-1)\le n \le {n-r+k-1\choose k-1}+n-r+k-1.$$
Then there exists a primitive uniquely $K_r^{(k)}$-saturated hypergraph on $n$ vertices.
 \end{corollary}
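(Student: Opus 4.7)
The plan is to derive Corollary~\ref{cor:1} as a direct specialization of Theorem~\ref{thm:tau_cosntruction_hypergraphs}. Setting $m = n-r+k-1$, Theorem~\ref{thm:tau_cosntruction_hypergraphs} guarantees the existence of a primitive uniquely $K_r^{(k)}$-saturated hypergraph on $n$ vertices whenever
\[
\chi(m, k-1) + m \;\le\; n \;\le\; \binom{m}{k-1} + m.
\]
The right-hand inequality appears verbatim in the statement of Corollary~\ref{cor:1}, so it suffices to show that the hypothesis $2(n-r+k-1) \le n$ implies the left-hand inequality in Theorem~\ref{thm:tau_cosntruction_hypergraphs}.

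For the left-hand side, I would invoke the bound of Graham and Sloane~\cite{gs-80}, which is stated in the paragraph before Theorem~\ref{thm:tau_construction}: for all integers with $0 < j \le m$, one has $\chi(m, j) \le m$. Applying this with $j = k-1$ (which is admissible since $k \ge 3$ forces $0 < k-1 \le m$, as $m = n-r+k-1 \ge k-1$ when $n > r$) yields
\[
\chi(n-r+k-1,\, k-1) \;\le\; n-r+k-1.
\]
Adding $n-r+k-1$ to both sides gives $\chi(n-r+k-1,k-1) + n-r+k-1 \le 2(n-r+k-1)$, and the assumed inequality $2(n-r+k-1) \le n$ then chains to produce the required bound $\chi(n-r+k-1,k-1) + n-r+k-1 \le n$.

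With both inequalities of Theorem~\ref{thm:tau_cosntruction_hypergraphs} verified under the hypotheses of Corollary~\ref{cor:1}, the desired primitive uniquely $K_r^{(k)}$-saturated hypergraph on $n$ vertices exists. There is essentially no obstacle here: the corollary is bookkeeping that substitutes the universal Graham--Sloane bound into the chromatic-number hypothesis of Theorem~\ref{thm:tau_cosntruction_hypergraphs}. The only minor point to double-check is that the edge-case $k = 3$, $\ell = 1$ does not degenerate; there $m = n - r + 2 \ge 3$ since $r < n$, so the binomial upper bound is nontrivial and the Graham--Sloane inequality applies without issue.
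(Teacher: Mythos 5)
Your proposal is correct and matches the paper's own derivation: the paper obtains Corollary~\ref{cor:1} from Theorem~\ref{thm:tau_cosntruction_hypergraphs} by exactly this substitution of the Graham--Sloane bound $\chi(m,k-1)\le m$ into the lower inequality, with the upper inequality unchanged. Your verification that $0 < k-1 \le n-r+k-1$ (so the bound applies) is a minor detail the paper leaves implicit, and it checks out since $n > r$.
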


\begin{corollary}\label{cor:2}
Let $r,n$ be integers where $3 < r < n$ that satisfy the following inequality:   $${n-r+3\choose 2}\ge n \ge \left\{ \begin{array}{ll}
   2(n-r)+4 &\mbox{if $n-r$ is odd}\\
   2(n-r)+3 &\mbox{if $n-r$ is even}\end{array}\right.
 $$
Then there exists a primitive uniquely $K_r^{(3)}$-saturated hypergraph on $n$ vertices.
\end{corollary}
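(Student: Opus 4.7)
The plan is to derive Corollary~\ref{cor:2} as a direct specialization of Theorem~\ref{thm:tau_cosntruction_hypergraphs} to the uniformity $k=3$. Setting $k=3$ in that theorem gives $n-r+k-1 = n-r+2$, and the existence hypothesis becomes
$$\chi(n-r+2,\,2) \,+\, (n-r+2)\ \le\ n\ \le\ \binom{n-r+2}{2} \,+\, (n-r+2).$$
So the task reduces to showing that, once $\chi(\,\cdot\,,2)$ is evaluated, the two sides of this inequality coincide with those stated in the corollary.

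For the upper bound, I would apply the elementary identity $\binom{m}{2}+m=\binom{m+1}{2}$ with $m=n-r+2$, which immediately yields
$$\binom{n-r+2}{2}+(n-r+2)=\binom{n-r+3}{2},$$
matching the upper bound $\binom{n-r+3}{2}\ge n$ stated in Corollary~\ref{cor:2}.

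For the lower bound, I would invoke the fact already recalled in the excerpt that $\chi(m,2)$ equals the chromatic index of $K_m$, and therefore $\chi(m,2)=m-1$ when $m$ is even while $\chi(m,2)=m$ when $m$ is odd. Applying this with $m=n-r+2$, whose parity is opposite to that of $n-r$, gives two cases. When $n-r$ is even, $m$ is even and $\chi(n-r+2,2)=n-r+1$, so the lower bound becomes $(n-r+1)+(n-r+2)=2(n-r)+3$. When $n-r$ is odd, $m$ is odd and $\chi(n-r+2,2)=n-r+2$, so the lower bound becomes $(n-r+2)+(n-r+2)=2(n-r)+4$. These are exactly the two cases listed in the corollary.

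Combining these two computations with Theorem~\ref{thm:tau_cosntruction_hypergraphs} finishes the proof. The argument is entirely routine: once Theorem~\ref{thm:tau_cosntruction_hypergraphs} is in hand, Corollary~\ref{cor:2} is a one-line specialization plus two elementary calculations, namely Pascal's rule and the classical parity case analysis of the chromatic index of $K_m$. No genuine obstacle arises, so I would not anticipate any delicate step.
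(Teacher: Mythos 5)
Your proof is correct and is precisely the paper's own route: Corollary~\ref{cor:2} is presented there as the immediate specialization of Theorem~\ref{thm:tau_cosntruction_hypergraphs} to $k=3$, using that $\chi(m,2)$ is the chromatic index of $K_m$ (equal to $m-1$ or $m$ according to the parity of $m$) together with the identity $\binom{m}{2}+m=\binom{m+1}{2}$. One harmless slip: $m=n-r+2$ has the \emph{same} parity as $n-r$, not the opposite, but your subsequent case analysis pairs the parities correctly ($n-r$ even with $m$ even, $n-r$ odd with $m$ odd), so all computations and conclusions stand.
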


Thus, uniquely $\tau$-critical hypergraphs can be used to prove the existence of primitive uniquely $K_r^{(k)}$-saturated hypergraphs on $n$ vertices for a wide range of parameter combinations. In the special case where $\ell = n - r = 1$, we show that the range of parameters is completely determined.

\begin{theorem}\label{thm:n-r=1}
Let $k \geq 4$ be given. There exists a primitive uniquely $K^{(k)}_{n-1}$-saturated hypergraph on $n$ vertices if and only if $k+2 \leq n \leq \frac{(k+2)^2}{4}$.
\end{theorem}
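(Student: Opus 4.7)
The plan is to apply Theorem~\ref{thm:tau_critical} and work with the complementary hypergraph $H^c$, which must be uniquely $\tau$-critical with $\tau(H^c)=2$ and no isolated vertices. First I would extract a structural description: denoting the edges of $H^c$ by $e_1,\dots,e_m$, uniqueness provides for each $e_i$ a single vertex $v_i$ lying in every edge of $H^c-e_i$. Since $\tau(H^c)\ge 2$ forces $v_i\notin e_i$, and if $v_i=v_j$ for some $i\ne j$ then $v_i$ would lie in every edge (contradicting $\tau\ge 2$), the witnesses are distinct. Writing $V^*:=\{v_1,\dots,v_m\}$, every edge takes the form
\[ e_i \;=\; (V^*\setminus\{v_i\})\cup W_i, \qquad W_i\subseteq W:=V(H^c)\setminus V^*, \quad |W_i|=k-m+1, \]
and in particular $m\le k+1$.

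Next I would record the two remaining constraints on $(W_1,\dots,W_m)$. First, $m\ge 3$: the case $m=2$ would force $v_1$ to be the only vertex of $e_2$, contradicting $|e_2|=k\ge 4$. Second, absence of isolated vertices gives $\bigcup_i W_i=W$, while uniqueness of each witness means no vertex of $W$ lies in every $e_j$ with $j\ne i$; a short argument shows this is equivalent to each $w\in W$ belonging to at most $m-2$ of the $W_i$'s. Conversely, any tuple $(V^*,W_1,\dots,W_m)$ satisfying these conditions produces a valid $H^c$, so the existence question reduces to the combinatorial problem of finding such tuples.

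From $\bigcup_i W_i=W$ and $|W_i|=k-m+1$ one has $|W|\le m(k-m+1)$, so
\[ n \;=\; m+|W| \;\le\; m(k-m+2). \]
The right-hand side is a downward parabola in $m$ whose maximum over integer $m\in[3,k]$ is $\lfloor(k+2)^2/4\rfloor$, attained at $m=\lfloor(k+2)/2\rfloor$; this gives the upper bound $n\le(k+2)^2/4$. The lower bound $n\ge k+2$ is automatic from $k<r=n-1$.

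For the converse direction, for each $m\in\{3,4,\dots,k\}$ I would realize every integer value of $|W|$ in the interval $[\lceil m(k-m+1)/(m-2)\rceil,\,m(k-m+1)]$ (degenerating to the single value $3(k-2)$ when $m=3$), producing via the structure above a valid hypergraph on $n=m+|W|$ vertices. A convenient construction starts with pairwise disjoint sets $W_1,\dots,W_m$ and repeatedly merges an element of one $W_i$ with an element of another, monitoring the maximum multiplicity so that it never exceeds $m-2$. The main obstacle I anticipate is the final combinatorial verification that, as $m$ ranges over $\{3,\dots,k\}$, the resulting $n$-intervals $[m+\lceil m(k-m+1)/(m-2)\rceil,\,m(k-m+2)]$ together cover $[k+2,\lfloor(k+2)^2/4\rfloor]$ with no gaps; this reduces to checking that for each consecutive pair of values of $m$ the upper endpoint of one interval meets or exceeds the lower endpoint of the next, an elementary but $m$-by-$m$ inequality that holds throughout the range $k\ge 4$.
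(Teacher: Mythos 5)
Your proposal is correct and is essentially the paper's own argument: your witness set $V^*$ is precisely the paper's set $S$ of codegree-one vertices (equivalently the core $A=[m]$ of its sufficiency construction), your coverage inequality $n\le m(k-m+2)$ is the paper's count $x(n-x-t+1)\ge n-x$ read in the complement (the paper finishes via the discriminant of $-x^2+(k+2)x-n$, you by maximizing the parabola over integer $m$), and your sufficiency constructions---edges $(V^*\setminus\{v_i\})\cup W_i$ with each $w\in W$ in at most $m-2$ of the $W_i$---are exactly the paper's uniquely $\tau$-critical hypergraphs with $\tau=2$. The only real difference is bookkeeping: the paper covers all $n\in\{k+2,\dots,\lfloor (k+2)^2/4\rfloor\}$ using just the two values $m=k$ and $m=\lceil (k+2)/2\rceil$, thereby sidestepping the two steps you leave asserted (realizing every $|W|$ in $[\lceil m(k-m+1)/(m-2)\rceil,\, m(k-m+1)]$, and the gap-free union of the $m$-intervals), both of which are in fact true.
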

\begin{proof}
We first prove necessity. The lower bound is trivial. For the upper bound, take $n > \frac{(k+2)^2}{4}$, $s = (n-1)-k$, and $t = n-k$, noting that $t-s = 1$. It suffices to show that there does not exist a hypergraph $R$ that satisfies the properties of Theorem~\ref{thm:complementary_hypergraphs} with parameters $n$, $t$, and $s$.

Suppose for the purpose of contradiction that such a hypergraph $R$ does exist and let $x = |E(R)|$. Since $t-s =1$, it follows that each element of $V(R)$ is covered by an edge of $R$, and each edge of $R$ contains exactly one element of $V(R)$ of codegree 1. Let $S$ denote the set containing the elements of $V(R)$ that have codegree 1 in $R$ (so that $|S| = x$), and put $T = [n] \setminus S$. Then each edge of $R$ consists of one element of $S$ and $(t-1)$ elements of $T$, implying that each edge of $R$ misses precisely $|T| - (t-1) = n - x - t + 1$ elements of $T$. Since each element of $T$ must be missed by at least one of the $x$ edges of $R$, it must hold that $x(n-x-t+1) \geq |T| = n-x$. Substituting $k = n-t$ and rearranging this inequality yields $-x^2 + (k+2)x - n \geq 0$.

Note that the quadratic function $f(x) = -x^2 + (k+2)x - n$ is concave, so it takes on nonnegative values if and only if $f$ has at least one real zero. However, the discriminant of the quadratic equation $-x^2 + (k+2)x - n = 0$ is $(k+2)^2 - 4n < 0$. Therefore, it does not hold that $f(x) \geq 0$ for any value of $x$, a contradiction.

\vspace{.1in}

We prove sufficiency by constructing the complements of the desired hypergraphs. Theorem~\ref{thm:tau_critical} implies that these hypergraphs must be uniquely $\tau$-critical for $\tau = 2$. Hence, the removal of any edge should result in a unique minimum transversal consisting of a single vertex.

Let $V$ be the set $[n]$, and for some integer $m$ such that $4 \leq m \leq k$, let $A$ denote the subset $[m]$ of $V$. Define a $k$-uniform hypergraph $H$ with vertex set $V$ and with edges $e_1,...,e_m$ so that each $e_i$ contains $A \setminus \{i\}$ and the parts of the $e_i$'s outside of $A$ define a $(k - m + 1)$-uniform hypergraph $H'$ on the vertex set $V \setminus A$ (with repeated edges allowed) with no isolated vertices. If each vertex in $H'$ belongs to at most $m-2$ edges, a minimum transversal of $H$ has size at least 2, and for any $e_i \in E(H)$, the hypergraph $H - e_i$ has the unique minimum transversal $\{i\}$. 

Thus, it suffices to show that such a hypergraph $H$ exists whenever $k+2 \leq n \leq \frac{(k+2)^2}{4}$. Note that for a fixed value of $m$, the largest possible size of $V$ is $m + m(k - m + 1) = m(k-m+2)$, which occurs when the edges of $H'$ are disjoint. Therefore, if we fix $m = k$, the largest such construction occurs on $2k$ vertices. Since $m = k \geq 4$, we can obtain similar constructions on $n$ vertices for all $n \in \{k+2,...,2k\}$. 

Next, fix $m = \left \lceil \frac{k+2}{2} \right \rceil$. The largest possible construction again occurs on $m(k-m+2) = \left \lceil \frac{k+2}{2} \right \rceil \cdot \left \lfloor \frac{k+2}{2} \right \rfloor = \left \lfloor \left( \frac{k+2}{2} \right)^2 \right \rfloor$ vertices. We can remove vertices from this construction and adjust the edges of $H'$ in order to obtain valid constructions for all $n$ where $2k + 1 \leq n \leq \left \lfloor \left( \frac{k+2}{2} \right)^2 \right \rfloor$. Specifically, always let $e_1 \setminus A = \{m+1,...,k+1\}$ and $e_2 \setminus A = \{k+2,...,2k-m + 2\}$. Select the remaining edges such that $e_i \setminus A \subseteq \{2k-m + 3,...,n\}$ for each $e_i$ and all of $\{2k-m + 3,...,n\}$ is eventually covered, which is possible as long as $n \geq m + 3(k-m+1)$. In the smallest case where $n = m + 3(k-m + 1)$, we have $n = \left \lceil \frac{k+2}{2} \right \rceil + 3\left( \lfloor \frac{k+2}{2}  \rfloor - 1 \right) \leq 2k + 1$. Hence, there exist valid constructions for all $n \in \{k+2,..., \left \lfloor \left( \frac{k+2}{2} \right)^2 \right \rfloor \}$.
\end{proof}

Note that the proof also holds for $k=3$ with the exception of the case $n = 5$, since in the construction where $m = k$, two vertices in $V \setminus A$ is not enough to form a graph with the desired degree requirements.

\section{Computational Search}
A primitive uniquely $K_r^{(k)}$-saturated hypergraph exists on $n$ vertices if and only if there exists a hypergraph $R$ on $n$ vertices satisfying the three properties of Theorem~\ref{thm:complementary_hypergraphs} with parameters $t = n-k$ and $s = r - k$. Hence, a straightforward approach to determine when primitive uniquely $K_r^{(k)}$-saturated hypergraphs exist is to computationally determine for which values of $n$, $t$, and $s$ valid complementary hypergraphs exist.

To this end, we developed an integer program with parameters $n$, $t$, and $s$ that has a feasible solution if and only if a desired complementary hypergraph $R$ exists. Specifically, for each $t$-set $T$ of $[n]$, we introduce a binary variable $x_T$ that indicates whether or not $T$ is included in $E(R)$. Furthermore, for each $(t-s)$-set $S$ of $[n]$, we introduce a binary variable $y_S$ that indicates whether or not $S$ has codegree 1 in $R$. The number of binary variables in this integer program increases exponentially with $n$, so it is only computationally feasible to use this method to determine the existence of relatively small complementary hypergraphs. Nevertheless, this search supports our results and offers directions for future research. The following tables summarize the main results of our search when put into the context of primitive uniquely $K_r^{(k)}$-saturated hypergraphs on $n$ vertices, separated according to uniformity. The symbols $\T$ and $\F$ indicate the existence or nonexistence, respectively, of a desired hypergraph with parameters $n$, $k$, and $r$, and a question mark indicates that we have yet to obtain results for the corresponding parameter combination.

\newcommand{\ar}[1]{\raisebox{-2pt}{$\xrightarrow{\hspace*{2pt}#1\hspace*{2pt}}$}}

\begin{table}[h]
\begin{center}
\setlength\tabcolsep{2pt}
\begin{tabular}{| c || c | c | c | c | c | c | c | c | c |}
\hline
$\mathbf{k=2}$ & $r=3$ & $r=4$ & $r=5$  & $r=6$ & $r=7$ & $r=8$ & $r=9$ & $r=10$ & \\
\hline
\hline
$n=r+1$ & \F & \Fv & \Fv & \Fv & \Fv & \Fv & \Fv & \Fv & \multicolumn{1}{|l|}{\ar{\Fv}} \\
\hline
$n=r+2$ & \T & \F & \Fv & \Fv & \Fv & \Fv & \Fv & \Fv & \multicolumn{1}{|l|}{\ar{\Fv}} \\
\hline
$n=r+3$ & \F & \T & \F & \Fv & \Fv & \Fv & \Fv & \Fv & \multicolumn{1}{|l|}{\ar{\Fv}} \\
\hline
$n=r+4$ & \F & \F & \T & \F & \Fv & \Fv & \Fv & \Fv & \multicolumn{1}{|l|}{\ar{\Fv}} \\
\hline
$n=r+5$ & \F & \F & \F & \T & \F & \Fv & \Fv & \Fv & \multicolumn{1}{|l|}{\ar{\Fv}} \\
\hline
$n=r+6$ & \F & \T & \F & \F & \T & \F & \Fv & \Fv & \multicolumn{1}{|l|}{\ar{\Fv}} \\
\hline
$n=r+7$ & \T & \F & \F & \F & \F & \T & \F & \Fv & \multicolumn{1}{|l|}{\ar{\Fv}} \\
\hline
$n=r+8$ & \F & \T & \F & \F & \F & \F & \T & \F & $ 11$\ar{\Fv} \\
\hline
$n=r+9$ & \F & \T & \F & \T & \F & \F & \F & \T & $ 12$\ar{\Fv} \\
\hline
\end{tabular}
\caption{Existence of primitive uniquely $K_r^{(k)}$-saturated hypergraphs for $k = 2$.}\label{tbl:k=2}
\end{center}
\end{table}

\begin{table}[p]
\begin{center}
\setlength\tabcolsep{2pt}
\begin{tabular}{| c || c | c | c | c | c | c | c | c | c | c | c | c | c |}
\hline
$\mathbf{k=3}$ & \small{$r=4$} & \small{$r=5$} & \small{$r=6$}  & \small{$r=7$} & \small{$r=8$} & \small{$r=9$} & \small{$r=10$} & \small{$r=11$} & \small{$r=12$} & \small{$r=13$} & \small{$r=14$} & \multicolumn{2}{|c|}{} \\
\hline
\hline
$n=r+1$ & \F & \Ts & \Ff & \Ff & \Ff & \Ff & \Ff & \Ff & \Ff & \Ff & \Ff & \multicolumn{2}{|l|}{\ar{\Ff}} \\
\hline
$n=r+2$ & \F & \Tv & \Tv & \Tv & \Tv & \F & \F & \Q & \Q & \Q & \Fv & \multicolumn{2}{|l|}{\ar{\Fv}}\\
\hline
$n=r+3$ & \T & \T & \T & \Tv & \Tv & \Tv & \Tv & \Tv & \Tv & \Q & \Q & \Q & $22$\ar{\Fv}\\
\hline
$n=r+4$ & \F & \T & \T & \Tv & \Tv & \Tv & \Tv & \Tv & \Tv & \Tv & \Tv & \ar{\Tv}$17$ & $32$\ar{\Fv}\\
\hline
$n=r+5$ & \F & \T & \Q & \T & \Q & \Tv & \Tv & \Tv & \Tv & \Tv & \Tv & \ar{\Tv}$23$ & $44$\ar{\Fv}\\
\hline

\end{tabular}
\caption{Existence of primitive uniquely $K_r^{(k)}$-saturated hypergraphs for $k = 3$.}\label{tbl:k=3}
\end{center}
\end{table}

\begin{table}[p]
\begin{center}
\setlength\tabcolsep{2pt}
\begin{tabular}{| c || c | c | c | c | c | c | c | c | c | c | c | c |}
\hline
$\mathbf{k=4}$ & \small{$r=5$} & \small{$r=6$} & \small{$r=7$}  & \small{$r=8$} & \small{$r=9$} & \small{$r=10$} & \small{$r=11$} & \small{$r=12$} & \small{$r=13$} & \small{$r=14$} & \multicolumn{2}{|c|}{} \\
\hline
\hline
$n=r+1$ & \Tt & \Ts & \Ts & \Ts & \Ff & \Ff & \Ff & \Ff & \Ff & \Ff & \multicolumn{2}{|l|}{\ar{\Ff}} \\
\hline
$n=r+2$ & \Tt & \T & \T & \Tv & \Tv & \Tv & \Tv & \Tv & \Tv & \Q & \Q & $ 28$\ar{\Fv} \\
\hline
$n=r+3$ & \Tt & \T & \T & \T & \Tv & \Tv & \Tv & \Tv & \Tv & \Tv & \ar{\Tv}$ 23$ & $ 52$\ar{\Fv} \\
\hline
$n=r+4$ & \Tt & \T & \T & \T & \Tv & \Tv & \Tv & \Tv & \Tv & \Tv & \ar{\Tv}$ 38$ & $ 87$\ar{\Fv} \\
\hline
$n=r+5$ & \Tt & \T & \Q & \T & \Q & \Tv & \Tv & \Tv & \Tv & \Tv & \ar{\Tv}$ 59$ & $135$\ar{\Fv} \\
\hline

\end{tabular}
\caption{Existence of primitive uniquely $K_r^{(k)}$-saturated hypergraphs for $k = 4$.}\label{tbl:k=4}
\end{center}
\end{table}

\begin{table}[p]
\begin{center}
\setlength\tabcolsep{2pt}
\begin{tabular}{|  c || c | c | c | c | c | c | c | c | c | c | c |}
\hline
$\mathbf{k=5}$ & \small{$r=6$} & \small{$r=7$}  & \small{$r=8$} & \small{$r=9$}  & \small{$r=10$} & \small{$r=11$} & \small{$r=12$} & \small{$r=13$} & \small{$r=14$} & \multicolumn{2}{|c|}{} \\
\hline
\hline
$n=r+1$ & \Tt & \Tt & \Ts & \Ts & \Ts & \Ts & \Ff & \Ff & \Ff & \multicolumn{2}{|l|}{\ar{\Ff}} \\
\hline
$n=r+2$ & \Tt & \Tt & \T & \T & \Tv & \Tv & \Tv & \Tv & \Tv & \ar{\Tv}$ 19$ & $ 48$\ar{\Fv} \\
\hline
$n=r+3$ & \Tt & \Tt & \T & \T & \T & \Tv & \Tv & \Tv & \Tv & \ar{\Tv}$ 39$ & $102$\ar{\Fv} \\
\hline
$n=r+4$ & \Tt & \Tt & \T & \T & \Tv & \Tv & \Tv & \Tv & \Tv & \ar{\Tv}$ 74$ & $192$\ar{\Fv} \\
\hline
$n=r+5$ & \Tt & \Tt & \Q & \Q & \Q & \Q & \Tv & \Tv & \Tv & \ar{\Tv}$130$ & $331$\ar{\Fv} \\
\hline

\end{tabular}
\caption{Existence of primitive uniquely $K_r^{(k)}$-saturated hypergraphs for $k = 5$.}\label{tbl:k=5}
\end{center}
\end{table}

\begin{table}[p]
\begin{center}
\setlength\tabcolsep{2pt}
\begin{tabular}{|  c || c | c | c | c | c | c | c | c | c | c | c | c | }
\hline
$\mathbf{k=6}$  & \small{$r=7$}  & \small{$r=8$} & \small{$r=9$} & \small{$r=10$} & \small{$r=11$} & \small{$r=12$} & \small{$r=13$} & \small{$r=14$} & \small{$r=15$} & \small{$r=16$} & \multicolumn{2}{|c|}{} \\
\hline
\hline
$n=r+1$ & \Tt & \Tt & \Tt & \Ts & \Ts & \Ts & \Ts & \Ts & \Ts & \Ff & \multicolumn{2}{|l|}{\ar{\Ff}} \\
\hline
$n=r+2$ & \Tt & \Tt &  \Tt & \T & \T & \Tv & \Tv & \Tv & \Tv & \Tv  & \ar{\Tv}$ 26$ & $ 75$\ar{\Fv} \\
\hline
$n=r+3$ & \Tt &  \Tt &  \Tt & \T & \T & \T & \Tv & \Tv & \Tv & \Tv & \ar{\Tv}$ 61$ & $179$\ar{\Fv} \\
\hline
$n=r+4$ & \Tt & \Tt & \Tt & \T & \T & \Q & \Q & \Tv & \Tv & \Tv & \ar{\Tv}$131$ & $374$\ar{\Fv} \\
\hline
$n=r+5$ & \Tt & \Tt & \Tt & \Q & \Q & \Q & \Tv & \Tv & \Tv & \Tv & \ar{\Tv}$257$ & $709$\ar{\Fv} \\
\hline

\end{tabular}
\caption{Existence of primitive uniquely $K_r^{(k)}$-saturated hypergraphs for $k = 6$.}\label{tbl:k=6}
\end{center}
\end{table}

In Table~\ref{tbl:k=2}, we see that primitive uniquely $K_r$-saturated graphs are fairly uncommon. The diagonal of $\T$s beginning at the entry $(n=r+2, \ r=3)$ corresponds to the complements of odd cycles, which were shown to be uniquely clique-saturated by David Collins and Bill Kay in 2011. The Petersen graph, a uniquely $K_3$-saturated Moore graph, corresponds to the entry $(n=r+7, \ r=3)$, and the entries $n=r+6$ and $n=r+8$ in the column $r=4$ correspond to two sporadic uniquely $K_4$-saturated graphs discovered by Cooper and Collins. 
The $r=4$ and $r=6$ entries in the $n=r+9$ row correspond to uniquely $K_r$-saturated graphs discovered by Hartke and Stolee~\cite{Hartke}.
Other than these, few primitive uniquely $K_r$-saturated graphs exist on $n$ vertices for small values of $n$.
We were not able to solve our integer program for larger values of $n$ and $r$, so Table~\ref{tbl:k=2} includes the results of the computational study included in \cite{Hartke}.

On the other hand, in Tables \ref{tbl:k=3} through \ref{tbl:k=6}, we see that many primitive uniquely $K_r^{(k)}$-saturated hypergraphs exist with uniformity at least 3. Note that the $i$th column of each table corresponds to the situation $s = r-k = i$ and the $j$th row corresponds to $\ell = n - r = j$. In terms of these tables, Theorem~\ref{thm:double_star} implies that certain columns correspond to situations in which the desired hypergraphs will always exist. Specifically, for each $k \geq 4$, any parameter combination within the first $k-3$ columns of the table for uniformity $k$ yields a desired hypergraph. The entries for which Theorem~\ref{thm:double_star} applies are each marked with a \text{\Tt}. 

On the other hand, Theorem~\ref{thm:upper_bound} implies that in any row of these tables, entries far enough to the right correspond to parameter combinations for which no desired hypergraphs exist. This phenomenon is visible in the second row of Table~\ref{tbl:k=3}; an entry beyond the upper bound is marked with \Fv. Theorem~\ref{thm:tau_cosntruction_hypergraphs} implies that a large portion of the entries in each row before approximately half of this upper bound indeed correspond to situations in which desired hypergraphs exist. Those entries covered by Theorem~\ref{thm:tau_cosntruction_hypergraphs} as implied by Corollaries \ref{cor:1} and \ref{cor:2} or bounds on $\chi(n-r+k-1, k-1)$ given by \cite{eb-96} are marked with \text{\Tv}.

We also observe the sharpness of Theorem~\ref{thm:n-r=1}; those entries corresponding to situations in which the hypergraphs do not exist as described by the theorem are each marked with \text{\Ff}, and those that can be constructed using the method in the proof of the theorem are marked with a \text{\Ts} if the parameter combination is not covered by Theorem~\ref{thm:double_star}.

\section{Conclusions and Future Directions}

In this paper, we have shown the existence and nonexistence of various primitive uniquely $K_r^{(k)}$-saturated hypergraphs by examining their related complementary hypergraphs and $\tau$-critical hypergraphs. Although few of these graphs are known to exist when $k=2$, we have shown that many of these hypergraphs indeed exist for larger uniformities via special constructions. However, we also see that many parameter combinations do not permit these hypergraphs, leaving more questions to be answered regarding the precise parameter combinations for which these hypergraphs exist. 

Interesting phenomena encountered during our computational search include the results for $k = 3$, $r=4$, which can be found in the first column of Table~\ref{tbl:k=3}. This table shares a similar structure with Tables~\ref{tbl:k=4} through \ref{tbl:k=6} corresponding to higher uniformities with the exception of this first column. The only instance of a primitive uniquely $K_4^{(3)}$-saturated hypergraph encountered throughout our search exists on $n=7$ vertices and corresponds to the Cs\'{a}sz\'{a}r polyhedron \cite{c-49} when each triangular face is considered as an edge. Otherwise, it is unknown whether or not any other such hypergraph exists, motivating the following question.

\begin{question}
  Are there only finitely many primitive uniquely $K_4^{(3)}$-saturated hypergraphs? 
\end{question}

Since primitive uniquely $K_3$-saturated graphs were shown to be the Moore graphs of diameter two in $\cite{Cooper}$, primitive uniquely $K_4^{(3)}$-saturated hypergraphs may be viewed as a $3$-uniform generalization of these Moore graphs, of which there are only finitely many \cite{Hoffman}.

Furthermore, with the exception of this first column of Table~\ref{tbl:k=3}, it appears that each row in Tables~\ref{tbl:k=3} through \ref{tbl:k=6} is ``monotone" in the sense the hypergraphs always exist in the first portion of the row, including those parameter combinations falling in between the ranges given by Theorems~\ref{thm:double_star} and \ref{thm:tau_cosntruction_hypergraphs}, but once a parameter combination is encountered for which no hypergraph exists, no such hypergraph exists for larger values of $r$ and $n$. This suggests a possible generalization of Theorem~\ref{thm:n-r=1} that holds for each row of the tables. 

\begin{question}
  Given integers $k \geq 3,\ell \geq 1$, can we completely determine a range for $n$ where primitive uniquely $K^{(k)}_{n - \ell}$-saturated hypergraphs exist on $n$ vertices?
\end{question}

Such a result would completely characterize the values of $k,r,n$ for which primitive uniquely $K^{(k)}_{r}$-saturated hypergraphs exist on $n$ vertices.

\bibliographystyle{abbrv}
\bibliography{mybib}

\end{document}